\documentclass[12pt]{article}
\usepackage{amsmath,amsfonts,amssymb,amsthm,txfonts,hyperref,graphicx,url}
\usepackage[a4paper,margin=2.5truecm]{geometry}
\newtheorem{theorem}{Theorem}
\newtheorem{lemma}{Lemma}
\newtheorem{question}{Question}
\newcommand{\N}{\mathbb N}
\newcommand{\Z}{\mathbb Z}
\title{On systems of Diophantine equations with a large number of integer solutions}
\date{}
\author{Apoloniusz Tyszka}
\begin{document}
\begin{sloppypar}
\maketitle
\begin{abstract}
Let \mbox{$E_n=\{x_i+x_j=x_k,~x_i \cdot x_j=x_k \colon~i,j,k \in \{1,\ldots,n\}\}$}.
For each integer \mbox{$n \geqslant 13$}, \mbox{J. Browkin} defined a system \mbox{$B_n \subseteq E_n$}
which has exactly $b_n$ solutions in integers \mbox{$x_1,\ldots,x_n$}, where
\mbox{$b_n \in \N \setminus \{0\}$} and the sequence $\left\{b_n\right\}_{n=13}^\infty$ rapidly tends to infinity.
For each integer \mbox{$n \geqslant 12$}, we define a system \mbox{$T_n \subseteq E_n$} which has exactly
$t_n$ solutions in integers \mbox{$x_1,\ldots,x_n$}, where \mbox{$t_n \in \N \setminus \{0\}$} and
\mbox{$\displaystyle \lim\limits_{n \to \infty} \frac{t_n}{b_n}=\infty$}.
\end{abstract}
\vskip 0.4truecm
\noindent
{\bf 2010 Mathematics Subject Classification:} Primary: 11D45; Secondary: 11D72, 11E25.
\vskip 0.7truecm
\noindent
{\bf Key words and phrases:} large number of integer solutions,
number of representations of $n$ as a sum of three squares of integers, system of Diophantine equations.
\vskip 1.2truecm
\par
For a \mbox{non-negative} integer $n$, let $r_k(n)$ denote the number of representations of $n$ as a sum of $k$ squares of integers.
Let \mbox{$E_n=\{x_i+x_j=x_k,~x_i \cdot x_j=x_k \colon~i,j,k \in \{1,\ldots,n\}\}$}. For an integer \mbox{$n \geqslant 13$},
let \mbox{$B_n$} denote the following system of equations (\cite{Browkin}):
\[
\left\{\begin{array}{rcl}
\forall i \in \{1,\ldots,n-13\} ~x_i \cdot x_i &=& x_{i+1} \\
x_{n-11}+x_{n-11} &=& x_{1} \\
x_{n-11} \cdot x_{n-11} &=& x_{1} \\
x_{n-10} \cdot x_{n-10} &=& x_{n-9} \\
x_{n-8} \cdot x_{n-8} &=& x_{n-7} \\
x_{n-6} \cdot x_{n-6} &=& x_{n-5} \\
x_{n-4} \cdot x_{n-4} &=& x_{n-3} \\
x_{n-9}+x_{n-7} &=& x_{n-2} \\
x_{n-5}+x_{n-3} &=& x_{n-1} \\
x_{n-2}+x_{n-1} &=& x_{n} \\
x_{n-11}+x_{n-12} &=& x_{n}
\end{array}\right.
\]
The system $B_n$ is contained in $E_n$ and $B_n$ equivalently expresses that
\[
x_1=\ldots=x_n=0
\]
or
\[
\left(x_{n-11}=2\right) \wedge \left(x_n=2+2^{\textstyle 2^{n-12}}=x_{n-10}^2+x_{n-8}^2+x_{n-6}^2+x_{n-4}^2\right) \wedge
\]
\[
all~the~other~variables~are~uniquely~determined~by~the~above~conjunction~and~the~equations~of~B_n{\rm .}
\]
\begin{theorem}\label{the1}(\cite{Browkin})
The system $B_n$ has exactly $1+r_4\left(2+2^{\textstyle 2^{n-12}}\right)=1+8\sigma\left(2+2^{\textstyle 2^{n-12}}\right)$
solutions in integers \mbox{$x_1,\ldots,x_n$}, where $\sigma(\cdot)$ denote the sum of positive divisors.
\end{theorem}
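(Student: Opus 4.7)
The plan is to verify the dichotomy stated right after the definition of $B_n$: every integer solution either has all coordinates zero or has the rigid form with $x_{n-11}=2$ and $x_n=2+2^{2^{n-12}}$. Once this is established, the count reduces to $1$ (the trivial solution) plus $r_4\!\left(2+2^{2^{n-12}}\right)$, and one then applies Jacobi's four-square theorem to rewrite $r_4$ in terms of $\sigma$.

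First I would combine the two equations $x_{n-11}+x_{n-11}=x_1$ and $x_{n-11}\cdot x_{n-11}=x_1$ to obtain $x_{n-11}^2=2x_{n-11}$, which forces $x_{n-11}\in\{0,2\}$. If $x_{n-11}=0$, then $x_1=0$ and the squaring chain $x_{i+1}=x_i^2$ propagates zeros all the way to $x_{n-12}=0$; the equation $x_{n-11}+x_{n-12}=x_n$ forces $x_n=0$, but $x_n=x_{n-2}+x_{n-1}=x_{n-10}^2+x_{n-8}^2+x_{n-6}^2+x_{n-4}^2$ is a sum of four integer squares, so each of $x_{n-10},x_{n-8},x_{n-6},x_{n-4}$ must vanish, which in turn forces $x_{n-9},x_{n-7},x_{n-5},x_{n-3},x_{n-2},x_{n-1}$ to zero---yielding exactly the trivial solution. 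If instead $x_{n-11}=2$, then $x_1=4$ and iterated squaring gives $x_i=2^{2^{i}}$ for $1\le i\le n-12$; in particular $x_{n-12}=2^{2^{n-12}}$, so $x_n=x_{n-11}+x_{n-12}=2+2^{2^{n-12}}$. The remaining constraint is $x_{n-10}^2+x_{n-8}^2+x_{n-6}^2+x_{n-4}^2=x_n=2+2^{2^{n-12}}$, and once a quadruple $(x_{n-10},x_{n-8},x_{n-6},x_{n-4})\in\Z^4$ satisfying this is chosen, the variables $x_{n-9},x_{n-7},x_{n-5},x_{n-3},x_{n-2},x_{n-1}$ are uniquely determined by the remaining squaring and addition equations. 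Hence this case contributes $r_4\!\left(2+2^{2^{n-12}}\right)$ solutions, and the total count is $1+r_4\!\left(2+2^{2^{n-12}}\right)$.

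To finish, I would invoke Jacobi's four-square theorem in the form $r_4(m)=8\sum_{d\mid m,\,4\nmid d}d$. For $n\ge 13$ one has $2+2^{2^{n-12}}=2\bigl(1+2^{2^{n-12}-1}\bigr)$ with $2^{n-12}-1\ge 1$, so the integer in parentheses is odd; every divisor of $m=2+2^{2^{n-12}}$ is therefore either odd or twice an odd number, hence not divisible by $4$, and Jacobi's sum collapses to $8\sigma(m)$. The main obstacle is really only careful bookkeeping in the $x_{n-11}=0$ case---confirming that \emph{every} one of the $n$ variables is forced to zero so that the trivial case contributes exactly one solution---together with verification of the $2\cdot(\mathrm{odd})$ structure of $2+2^{2^{n-12}}$ that makes the Jacobi formula simplify cleanly to $8\sigma$.
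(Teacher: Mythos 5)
Your proof is correct and follows exactly the route the paper indicates: it derives the dichotomy stated before Theorem~\ref{the1} (from $x_{n-11}^2=2x_{n-11}$, forcing either the all-zero solution or the rigid chain with $x_n=2+2^{\textstyle 2^{n-12}}=x_{n-10}^2+x_{n-8}^2+x_{n-6}^2+x_{n-4}^2$), counts $1+r_4\left(2+2^{\textstyle 2^{n-12}}\right)$, and applies Jacobi's theorem using that $2+2^{\textstyle 2^{n-12}}$ is twice an odd number. The paper itself only cites \cite{Browkin} and sketches this dichotomy without details, so your write-up is a faithful and complete filling-in of the same argument, including the correct bookkeeping in the zero case (nonnegativity of the two sums of squares forcing all remaining variables to vanish).
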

\par
For a positive integer \mbox{$n \geqslant 12$}, let $T_n$ denote the following system of equations:
\[
\left\{\begin{array}{rcl}
\forall i \in \{1,\ldots,n-12\} ~x_i \cdot x_i &=& x_{i+1} \\
x_{n-10} \cdot x_{n-10} &=& x_{n-10} \\
x_{n-10}+x_{n-10} &=& x_{n-9} \\
x_{n-8}+x_{n-9} &=& x_{1} \\
x_{n-8} \cdot x_{n-7} &=& x_{n-11} \\
x_{n-10} \cdot x_{n-7} &=& x_{n-7} \\
x_{n-6} \cdot x_{n-6} &=& x_{n-5} \\
x_{n-4} \cdot x_{n-4} &=& x_{n-3} \\
x_{n-2} \cdot x_{n-2} &=& x_{n-1} \\
x_{n-5}+x_{n-3} &=& x_n \\
x_{n-1}+x_n &=& x_{n-11} 
\end{array}\right.
\]
\begin{theorem}\label{the2}
The system $T_n$ is contained in $E_n$ and $T_n$ has exactly
\mbox{$1+\sum\limits_{\textstyle k=0}^{\textstyle 2^{n-12}} r_3\left(\left(2 \pm 2^k\right)^{\textstyle 2^{n-12}}\right)$}
solutions in integers \mbox{$x_1,\ldots,x_n$}.
\end{theorem}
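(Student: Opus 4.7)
The plan is to first check by inspection that every equation of $T_n$ has the form $x_i + x_j = x_k$ or $x_i \cdot x_j = x_k$, so that $T_n \subseteq E_n$. The counting then proceeds by case analysis on the idempotent equation $x_{n-10} \cdot x_{n-10} = x_{n-10}$, which over $\Z$ forces $x_{n-10} \in \{0,1\}$.

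In the case $x_{n-10}=0$, I would propagate zeros: $x_{n-9}=0$, and then $x_{n-10}\cdot x_{n-7} = x_{n-7}$ gives $x_{n-7}=0$. Iterating the squaring equations yields $x_{n-11} = x_1^{2^{n-12}}$ where $x_1 = x_{n-8}$, so $x_{n-8} \cdot x_{n-7} = x_{n-11}$ becomes $x_{n-8}^{2^{n-12}} = 0$, forcing $x_1 = \ldots = x_{n-11} = 0$. The remaining equations then give $x_{n-2}^2 + x_{n-6}^2 + x_{n-4}^2 = x_{n-11} = 0$, so all remaining variables vanish as well. This produces exactly one solution, accounting for the ``$+1$'' in the formula.

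In the case $x_{n-10}=1$ we obtain $x_{n-9}=2$ and $x_1 = x_{n-8}+2$, and the squaring chain gives $x_{n-11} = (x_{n-8}+2)^{2^{n-12}}$. The equation $x_{n-8} \cdot x_{n-7} = x_{n-11}$ then requires $x_{n-8} \mid (x_{n-8}+2)^{2^{n-12}}$, and pinning down exactly which integers satisfy this divisibility is the principal obstacle. Since $\gcd(x_{n-8},x_{n-8}+2) \mid 2$, I would split into subcases: if $x_{n-8}$ is odd the gcd is $1$ and we get $x_{n-8}=\pm 1$; if $x_{n-8}=2m$ is even, the condition reduces to $m \mid 2^{2^{n-12}-1}(m+1)^{2^{n-12}}$, and since $\gcd(m,m+1)=1$ this forces $m \mid 2^{2^{n-12}-1}$. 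Together the admissible values are exactly $x_{n-8} = \pm 2^k$ with $k \in \{0,1,\ldots,2^{n-12}\}$, and a direct check confirms that in each of the resulting $2(2^{n-12}+1)$ cases the quotient $x_{n-7} = (x_{n-8}+2)^{2^{n-12}}/x_{n-8}$ is indeed an integer.

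For each admissible $x_{n-8}$, the variables $x_1,\ldots,x_{n-7}$ together with $x_{n-10}$ and $x_{n-9}$ are uniquely determined, while the remaining equations combine to $x_{n-2}^2 + x_{n-6}^2 + x_{n-4}^2 = x_{n-11} = (2 \pm 2^k)^{2^{n-12}}$, with $x_{n-5},x_{n-3},x_{n-1},x_n$ depending only on $x_{n-6},x_{n-4},x_{n-2}$. This subcase therefore contributes $r_3\bigl((2\pm 2^k)^{2^{n-12}}\bigr)$ solutions. Summing over both signs and over $k = 0,1,\ldots,2^{n-12}$ and adding the zero solution from the first case yields the stated total. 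Apart from the gcd-based divisibility argument, every step is routine propagation along the squaring chain together with the definition of $r_3$.
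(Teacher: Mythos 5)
Your proposal is correct, and its overall skeleton --- the case split on the idempotent equation forcing $x_{n-10}\in\{0,1\}$, the propagation of zeros in the first case, and the reduction of the second case to counting representations $x_{n-11}=x_{n-6}^2+x_{n-4}^2+x_{n-2}^2$ via the last five equations --- coincides with the paper's. Where you genuinely diverge is in the one nontrivial step, the characterization of the admissible values of $x_{n-8}=x_1-2$. The paper applies the polynomial identity
\[
x_1^{\textstyle 2^{n-12}}=2^{\textstyle 2^{n-12}}+\left(x_1-2\right)\cdot\sum\limits_{k=0}^{\textstyle 2^{n-12}-1}2^{\textstyle 2^{n-12}-1-k}\cdot x_1^k
\]
(the factorization of $x^N-2^N$) to conclude in one stroke that $x_1-2$ divides $2^{\textstyle 2^{n-12}}$, whence $x_1\in\left\{2\pm 2^k\colon 0\leqslant k\leqslant 2^{n-12}\right\}$; you instead analyze the divisibility $x_{n-8}\mid\left(x_{n-8}+2\right)^{\textstyle 2^{n-12}}$ directly via $\gcd\left(x_{n-8},x_{n-8}+2\right)\mid 2$ and a parity split, with the coprimality of $m$ and $m+1$ doing the work in the even case. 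The two arguments yield the same candidate set and are both elementary; the paper's identity is more compact and, read backwards, simultaneously exhibits the integer value of $x_{n-7}$ for each candidate, whereas your route requires --- and you supply --- the separate verification that $\left(x_{n-8}+2\right)^{\textstyle 2^{n-12}}/x_{n-8}$ is an integer for $x_{n-8}=\pm 2^k$ with $k\leqslant 2^{n-12}$, a sufficiency check on which the ``exactly'' in the count depends and which the paper leaves implicit. Your version is slightly longer but arguably more self-contained on that point, and you are also more explicit than the paper in excluding $x_{n-8}=0$ and in confirming that distinct values of $x_{n-8}$ give disjoint families of solutions, so no double counting occurs.
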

\begin{proof}
The system $T_n$ equivalently expresses that 
\[
x_1=\ldots=x_n=0
\]
or
\[
\left(x_{n-10}=1\right) \wedge
\left(\left(x_1-2\right) \cdot x_{n-7}=x_{n-11}=x_1^{\textstyle 2^{n-12}}=x_{n-6}^2+x_{n-4}^2+x_{n-2}^2\right) \wedge
\]
\[
all~the~other~variables~are~uniquely~determined~by~the~above~conjunction~and~the~equations~of~T_n{\rm .}
\]
In the second case, by the polynomial identity
\[
x_1^{\textstyle 2^{n-12}}=2^{\textstyle 2^{n-12}}+\left(x_1-2\right) \cdot
\sum\limits_{\textstyle k=0}^{\textstyle 2^{n-12}-1} 2^{\textstyle 2^{n-12}-1-k} \cdot x_1^k
\]
we obtain that 
\[
2^{\textstyle 2^{n-12}}=\left(x_1-2\right) \cdot \left(x_{n-7}-
\sum\limits_{\textstyle k=0}^{\textstyle 2^{n-12}-1} 2^{\textstyle 2^{n-12}-1-k} \cdot x_1^{k}\right)
\]
Hence, \mbox{$x_1-2$} divides \mbox{$2^{\textstyle 2^{n-12}}$}.
Therefore, \mbox{$x_1 \in \left\{2 \pm 2^{k} \colon k \in \left[0,2^{n-12} \right] \cap \Z \right\}$}.
Consequently,
\[
x_{n-11}=x_1^{\textstyle 2^{n-12}} \in
\left\{\left(2 \pm 2^{k}\right)^{\textstyle 2^{n-12}} \colon k \in \left[0,2^{n-12} \right] \cap \Z \right\}
\]
Since the last five equations of $T_n$ equivalently express that \mbox{$x_{n-11}=x_{n-6}^2+x_{n-4}^2+x_{n-2}^2$},
the proof is complete.
\end{proof}
\vskip 0.01truecm
\par
The following lemma is a consequence of Siegel's theorem (\cite{Siegel}).
\begin{lemma}\label{lem1}
(\cite[p.~119]{Freeden},~\cite[p.~271]{Michel}) For every \mbox{$\varepsilon \in (0,\infty)$}
there exists $c\left(\varepsilon\right) \in (0,\infty)$ such that
$r_3 \left(4^{\textstyle s} \cdot m \right) \geqslant c\left(\varepsilon\right) \cdot m^{\textstyle \frac{1}{2}-\varepsilon}$
for every non-negative integer $s$ and every positive integer
$m \not\in \left\{4k \colon k \in \Z \right\} \cup \left\{8k+7 \colon k \in \Z \right\}$.
\end{lemma}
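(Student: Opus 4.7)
The plan is to reduce the lemma to a standard consequence of Siegel's theorem on the growth of class numbers of imaginary quadratic orders.

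First, I would use the classical identity $r_3(4n)=r_3(n)$ for $n\geqslant 1$, which follows from a parity observation: in any representation $x^2+y^2+z^2=4n$, the integers $x,y,z$ must all be even (otherwise the left-hand side is $\not\equiv 0\pmod 4$), so halving gives a bijection with representations of $n$. Iterating $s$ times yields $r_3(4^s m)=r_3(m)$, so it suffices to prove $r_3(m)\geqslant c(\varepsilon)\,m^{1/2-\varepsilon}$ whenever $m\not\equiv 0\pmod 4$ and $m\not\equiv 7\pmod 8$.

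Next, I would invoke the Gauss--Hurwitz class-number formula
\[
r_3(m)=\begin{cases}12\,H(4m) & \text{if }m\equiv 1,2\pmod 4,\\ 24\,H(m) & \text{if }m\equiv 3\pmod 8,\end{cases}
\]
where $H(D)$ is the Hurwitz class number of binary quadratic forms of discriminant $-D$. Writing $-D=-f^2 D_0$ with $-D_0$ the associated fundamental discriminant, the classical conductor formula
\[
h(f^2 D_0)=h(D_0)\cdot f\prod_{p\mid f}\bigl(1-(D_0/p)/p\bigr)
\]
(with a bounded unit-group factor absorbed into $H$) shows that $H(D)$ dominates a positive multiple of $h(-D_0)$.

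Finally, I would apply Siegel's theorem in its standard form: for every $\varepsilon>0$ there exists $c_1(\varepsilon)>0$ with $h(-D_0)\geqslant c_1(\varepsilon)\,D_0^{1/2-\varepsilon}$ for every fundamental discriminant $-D_0$. Combining this with the conductor product gives $H(D)\gg_\varepsilon D^{1/2-\varepsilon}$, and since $D\in\{m,4m\}$ in the two admissible cases, the desired bound $r_3(m)\gg_\varepsilon m^{1/2-\varepsilon}$ follows. The main obstacle is the bookkeeping of the conductor contribution so that the exponent $1/2-\varepsilon$ is preserved uniformly across all admissible $m$, particularly those with a large square factor: one absorbs the loss $f^{-2\varepsilon}$ from $f\cdot h(D_0)\gg f^{1-2\varepsilon}D^{1/2-\varepsilon}$ using $f\geqslant 1$. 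This step is routine but requires the class-number relation between non-maximal orders and the maximal order. Note that Siegel's constant $c_1(\varepsilon)$, and hence $c(\varepsilon)$, is ineffective.
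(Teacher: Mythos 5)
Your argument is correct and takes essentially the route the paper itself indicates: the paper gives no proof of Lemma~\ref{lem1}, merely citing \cite{Freeden} and \cite{Michel} and remarking that the lemma is a consequence of Siegel's theorem \cite{Siegel}, and your derivation --- the reduction $r_3(4^s m)=r_3(m)$ by the parity argument, the Gauss--Hurwitz formula $r_3(m)=12\,H(4m)$ or $24\,H(m)$ in the admissible residue classes, the conductor relation to the maximal order, and Siegel's lower bound $h(-D_0)\geqslant c_1(\varepsilon)\,D_0^{1/2-\varepsilon}$ --- is precisely the standard proof found in those references. Your bookkeeping of the conductor loss (absorbing $f^{-2\varepsilon}$ via $f^{1-\varepsilon}\geqslant f^{1-2\varepsilon}=\left(f^2\right)^{1/2-\varepsilon}$, with the Euler factor $\prod_{p\mid f}\left(1-1/p\right)\gg f^{-\varepsilon}$) and your note that $c(\varepsilon)$ is ineffective are both accurate.
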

\par
Let $b_n$ denote the number of integer solutions of $B_n$, and let $t_n$ denote the number of integer solutions of $T_n$.
\begin{theorem}\label{the3}
\mbox{$\displaystyle \lim\limits_{n \to \infty} \frac{t_n}{b_n}=\infty$}.
\end{theorem}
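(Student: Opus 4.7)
The plan is to lower-bound $t_n$ by the contribution of a single carefully chosen summand and to upper-bound $b_n$ by a standard estimate on the divisor-sum function. Set $N=2^{n-12}$; then $b_n=1+8\sigma(2+2^N)$ by Theorem~\ref{the1}, while $t_n$ is essentially a sum of $r_3$-values indexed by $k\in\{0,\ldots,N\}$ and a sign. I would keep just the $k=N$, $+$ term, giving $t_n\geq r_3\bigl((2+2^N)^N\bigr)$.

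Fix any $\varepsilon\in(0,1/2)$. First I would invoke the classical bound $\sigma(m)=O(m^{1+\varepsilon})$ (which follows, e.g., from $\sigma(m)\leq m\cdot d(m)$ together with $d(m)=m^{o(1)}$) to get
\[
b_n=O\bigl(2^{N(1+\varepsilon)}\bigr).
\]

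Second I would apply Lemma~\ref{lem1} to $(2+2^N)^N=2^N\cdot(1+2^{N-1})^N$. For $n\geq 13$ the integer $N$ is an even power of $2$, so $2^N=4^{N/2}$, and the odd factor $m:=(1+2^{N-1})^N$ satisfies $m\equiv 1\pmod 8$ (since $1+2^{N-1}\equiv 1\pmod 8$ for $N\geq 4$, while $m=9\equiv 1\pmod 8$ when $N=2$). Hence $(2+2^N)^N=4^{N/2}\cdot m$ is admissible for Lemma~\ref{lem1}, which gives
\[
t_n\geq c(\varepsilon)\cdot m^{1/2-\varepsilon}\geq c(\varepsilon)\cdot 2^{(N-1)N(1/2-\varepsilon)}.
\]
Dividing and simplifying,
\[
\frac{t_n}{b_n}\geq \Omega\bigl(2^{(1/2-\varepsilon)N^2-O(N)}\bigr)\longrightarrow\infty,
\]
which proves Theorem~\ref{the3}.

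The main obstacle, as I see it, is the choice of $k$: a naive choice like $k=0$ (giving $3^N$) yields only $r_3(3^N)\gtrsim 3^{N/2}=(\sqrt 3)^N$, which does \emph{not} beat the roughly $2^N$-sized upper bound on $b_n$, since $\sqrt 3<2$. The key insight is that the range $k\in[0,2^{n-12}]$ allowed by Theorem~\ref{the2} is huge, so taking $k=N$ inflates $x_1$ to order $2^N$ and $x_{n-11}=x_1^N$ to order $2^{N^2}$; then even the square-root-with-loss lower bound of Lemma~\ref{lem1} leaves an exponent of order $N^2$, dwarfing the $O(N)$ exponent in $b_n$. The only residual technicality is verifying the admissibility of $(2+2^N)^N$ for Lemma~\ref{lem1}, which reduces to the elementary $2$-adic computation above; no subtle analytic input beyond Siegel's theorem (embedded in Lemma~\ref{lem1}) is needed.
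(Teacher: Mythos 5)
Your proposal is correct and takes essentially the same route as the paper: both isolate the single term $r_3\bigl((2+2^N)^N\bigr)$ with $N=2^{n-12}$ (the $k=N$, plus-sign summand), factor it as $4^{N/2}\cdot(1+2^{N-1})^N$ to apply Lemma~\ref{lem1} (the paper simply fixes $\varepsilon=1/4$ where you keep $\varepsilon$ general), and then defeat an elementary $\sigma$-based upper bound on $b_n$ (the paper uses the cruder $\sigma(m)<m^2$, giving $b_n<2^{\textstyle 2^{n-9}}=2^{8N}$, where you use $\sigma(m)=O(m^{1+\varepsilon})$). Your explicit mod-$8$ verification that $(1+2^{N-1})^N$ is admissible for Lemma~\ref{lem1} is a detail the paper leaves implicit, but otherwise the arguments coincide.
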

\begin{proof}
Let an integer $n$ is greater than $12$. By Theorem~\ref{the2},
\[
t_n>r_3\left(\left(2+2^{\textstyle 2^{n-12}}\right)^{\textstyle 2^{n-12}}\right)=
r_3\left(4^{\textstyle 2^{n-13}} \cdot \left(1+2^{\textstyle 2^{n-12}-1}\right)^{\textstyle 2^{n-12}}\right)
\]
By Lemma~\ref{lem1}, for each \mbox{$\varepsilon \in (0,\infty)$} there
exists \mbox{$c(\varepsilon) \in (0,\infty)$} such that
\[
r_3\left(4^{\textstyle 2^{n-13}} \cdot \left(1+2^{\textstyle 2^{n-12}-1}\right)^{\textstyle 2^{n-12}} \right)
\geqslant c(\varepsilon) \cdot
\left(\left(1+2^{\textstyle 2^{n-12}-1} \right)^{\textstyle 2^{n-12}} \right)^{\textstyle \frac{1}{2}-\varepsilon}
\]
for every integer \mbox{$n>12$}. We take \mbox{$\varepsilon=\displaystyle\frac{1}{4}$}.
Since \mbox{$2^{n-12}-1 \geqslant 2^{n-13}$}, we get
\[
c(\varepsilon) \cdot \left(\left(1+2^{\textstyle 2^{n-12}-1} \right)^
{\textstyle 2^{n-12}} \right)^{\textstyle \frac{1}{2}-\varepsilon} >
c\left(\frac{1}{4}\right) \cdot \left(2^{\textstyle 2^{n-13}}\right)^
{\textstyle 2^{n-12} \cdot \frac{1}{4}}=c\left(\frac{1}{4}\right) \cdot 2^{\textstyle 2^{2n-27}}
\]
Since \mbox{$1<\sigma\left(2+2^{\textstyle 2^{n-12}}\right)$} and
\mbox{$2+2^{\textstyle 2^{n-12}}<2^{\textstyle 2^{n-11}}$}, Theorem~\ref{the1} gives:
\[
b_{n}=1+8\sigma\left(2+2^{\textstyle 2^{n-12}}\right)<9\sigma\left(2+2^{\textstyle 2^{n-12}}\right) \leqslant
9 \cdot \sum\limits_{k=1}^{\textstyle 2+2^{\textstyle 2^{n-12}}} k <
9 \cdot \sum\limits_{k=1}^{\textstyle 2^{\textstyle 2^{n-11}}} k <
9 \cdot 2^{\textstyle 2^{n-11}} \cdot 2^{\textstyle 2^{n-11}}=
\]
\[
9 \cdot 2^{\textstyle 2^{n-10}}<2^{\textstyle 2^{n-9}}
\]
\vskip 0.01truecm
\noindent
Therefore, $\displaystyle \frac{t_n}{b_n}>\frac{c\left(\displaystyle\frac{1}{4}\right) \cdot
2^{\textstyle 2^{2n-27}}}{2^{\textstyle 2^{n-9}}}$. This quotient tends to infinity when $n$
tends to infinity, which completes the proof.
\end{proof}
\newpage
The following {\sl Mathematica} code first computes decimal approximations
of \mbox{$\displaystyle \frac{t_n}{b_n}$} for all integers \mbox{$n \in [13,19]$}.
\vskip 0.01truecm
\begin{center}
\includegraphics[width=\hsize]{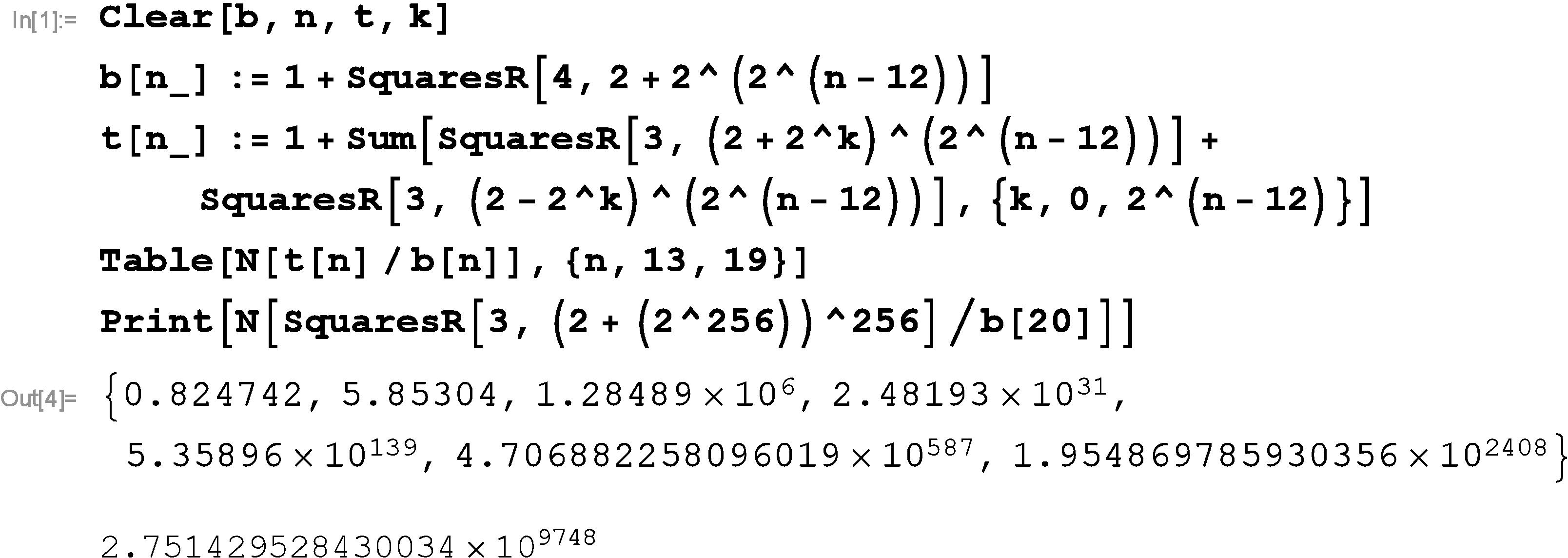}
\end{center}
The output results show that \mbox{$t_n>b_n$} for every integer \mbox{$n \in [14,19]$}.
By Theorem~\ref{the2},
\[
\frac{t_{20}}{b_{20}}>\frac{r_3\left(\left(2+2^{\textstyle 2^{20-12}}\right)^{\textstyle 2^{20-12}}\right)}{b_{20}}=
\frac{r_3\left(\left(2+2^{256}\right)^{256}\right)}{b_{20}}
\]
The last command of the code finds the decimal approximation of the last quotient.
Hence, \mbox{$\displaystyle \frac{t_{20}}{b_{20}}>2.75 \cdot 10^{9748}$}.
It seems that \mbox{$t_n>b_n$} for every integer \mbox{$n \geqslant 21$}, although this remains unproven.
\vskip 0.2truecm
\par
Let us define the height of a rational number $\frac{p}{q}$ by \mbox{${\rm max}(|p|,|q|)$}
provided $\frac{p}{q}$ is written in lowest terms. Let us define the height of a rational
tuple \mbox{$(x_1,\ldots,x_n)$} as the maximum of $n$ and the heights of the numbers \mbox{$x_1,\ldots,x_n$}.
\vskip 0.2truecm
\par
For an integer \mbox{$n \geqslant 4$}, let $S_n$ denote the following system of equations:
\[
\left\{\begin{array}{rcl}
\forall i \in \{1,\ldots,n-4\} ~x_i \cdot x_i &=& x_{i+1} \\
x_{n-2}+1 &=& x_1 \\
x_{n-1}+1 &=& x_{n-2} \\
x_{n-1} \cdot x_n &=& x_{n-3}
\end{array}\right.
\]
\begin{theorem}\label{the4}(\cite{Tyszka})
The system $S_n$ is soluble in positive integers and has only finitely many integer solutions.
Each integer solution \mbox{$(x_1,\ldots,x_n)$} satisfies
\mbox{$|x_1|,\ldots,|x_n| \leqslant \left(2+2^{\textstyle 2^{n-4}}\right)^{\textstyle 2^{n-4}}$}.
The following equalities
\begin{displaymath}
\begin{array}{rcl}
\forall i \in \{1,\ldots,n-3\} ~x_i &=& \left(2+2^{\textstyle 2^{n-4}}\right)^{\textstyle 2^{i-1}} \\
x_{n-2} &=& 1+2^{\textstyle 2^{n-4}} \\
x_{n-1} &=& 2^{\textstyle 2^{n-4}} \\
x_n &=& \left(1+2^{\textstyle 2^{n-4}-1}\right)^{\textstyle 2^{n-4}}
\end{array}
\end{displaymath}
define the unique integer solution whose height is maximal.
\end{theorem}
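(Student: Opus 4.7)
The plan is to reduce the system $S_n$ to a single Diophantine condition on $x_1$, exactly as in the proof of Theorem~\ref{the2}. First I would exploit the chain of squaring equations $x_i\cdot x_i=x_{i+1}$ for $i=1,\ldots,n-4$ to express
\[
x_i=x_1^{\textstyle 2^{i-1}} \quad \text{for every } i\in\{1,\ldots,n-3\}.
\]
The three remaining equations then give $x_{n-2}=x_1-1$, $x_{n-1}=x_1-2$, and
\[
(x_1-2)\cdot x_n=x_{n-3}=x_1^{\textstyle 2^{n-4}}.
\]

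Next I would apply the same polynomial identity used in the proof of Theorem~\ref{the2}, namely
\[
x_1^{\textstyle 2^{n-4}}=2^{\textstyle 2^{n-4}}+(x_1-2)\cdot\sum_{k=0}^{\textstyle 2^{n-4}-1}2^{\textstyle 2^{n-4}-1-k}\cdot x_1^k,
\]
to conclude that $x_1-2$ divides $2^{\textstyle 2^{n-4}}$. Hence $x_1\in\{2\pm 2^k\colon k\in[0,2^{n-4}]\cap\Z\}$, which immediately yields both finiteness of the integer solution set and the crude bound $|x_1|\leqslant 2+2^{\textstyle 2^{n-4}}$. Propagating this through the formulas $x_i=x_1^{\textstyle 2^{i-1}}$ gives $|x_i|\leqslant(2+2^{\textstyle 2^{n-4}})^{\textstyle 2^{n-4}}$ for $i\in\{1,\ldots,n-3\}$, and the bound for $|x_{n-2}|,|x_{n-1}|$ is trivial. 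The remaining component is $x_n=x_1^{\textstyle 2^{n-4}}/(x_1-2)$, whose absolute value is at most $|x_{n-3}|/1=(2+2^{\textstyle 2^{n-4}})^{\textstyle 2^{n-4}}$ whenever $x_1-2\neq 0$; the case $x_1=2$ forces $x_1^{\textstyle 2^{n-4}}=0$, which is impossible, so it does not occur.

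To finish, I would verify that the stated tuple is indeed an integer solution by direct substitution (the only nontrivial check is $(x_1-2)\cdot x_n=x_{n-3}$, which becomes $2^{\textstyle 2^{n-4}}\cdot(1+2^{\textstyle 2^{n-4}-1})^{\textstyle 2^{n-4}}=(2+2^{\textstyle 2^{n-4}})^{\textstyle 2^{n-4}}$, an instance of $2^m(1+2^{m-1})^m=(2+2^m)^m$). Solubility in positive integers then follows since all the listed values are positive. For uniqueness of the maximum-height solution, observe that the height of any solution equals $\max(n,|x_{n-3}|)=\max(n,|x_1|^{\textstyle 2^{n-4}})$, so maximality of the height is equivalent to maximality of $|x_1|$; among the admissible values $2\pm 2^k$ with $k\in[0,2^{n-4}]\cap\Z$, the unique maximizer of $|x_1|$ is $x_1=2+2^{\textstyle 2^{n-4}}$, and this choice determines all other $x_i$ via the formulas derived above.

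The main obstacle I anticipate is purely bookkeeping: verifying carefully that $|x_n|$ is also dominated by $(2+2^{\textstyle 2^{n-4}})^{\textstyle 2^{n-4}}$ for every admissible $x_1$ (so that the universal bound really holds on all coordinates, not just on $x_1,\ldots,x_{n-3}$), and ruling out the value $x_1=2-2^{\textstyle 2^{n-4}}$ as a competing maximizer by noting that $|2-2^{\textstyle 2^{n-4}}|=2^{\textstyle 2^{n-4}}-2<2^{\textstyle 2^{n-4}}+2$ whenever $n\geqslant 4$. No deep arithmetic input beyond the polynomial identity is needed; the argument is structurally identical to the one given for Theorem~\ref{the2}.
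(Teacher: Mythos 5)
Your proposal is correct, and since the paper states Theorem~\ref{the4} only with a citation to \cite{Tyszka} and gives no proof of it, the natural comparison is with the paper's proof of Theorem~\ref{the2}, which your argument mirrors exactly: the same squaring chain giving $x_{n-3}=x_1^{\textstyle 2^{n-4}}$, the same identity $x_1^{\textstyle 2^{n-4}}-2^{\textstyle 2^{n-4}}=(x_1-2)\cdot\sum_{k=0}^{2^{n-4}-1}2^{\textstyle 2^{n-4}-1-k}\cdot x_1^k$ forcing $x_1-2$ to divide $2^{\textstyle 2^{n-4}}$, hence $x_1\in\{2\pm 2^k\colon k\in[0,2^{n-4}]\cap\Z\}$, with all other coordinates determined --- plainly the intended route. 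One small caveat: your reduction ``maximal height is equivalent to maximal $|x_1|$'' via height $=\max\left(n,|x_1|^{\textstyle 2^{n-4}}\right)$ needs the winning height $\left(2+2^{\textstyle 2^{n-4}}\right)^{\textstyle 2^{n-4}}$ to exceed $n$ strictly, which holds for $n\geqslant 5$ but degenerates at $n=4$, where e.g.\ the solution $(3,2,1,3)$ ties the claimed maximal solution $(4,3,2,2)$ at height $4$; this is a defect of the quoted statement itself rather than of your reconstruction.
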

\newpage
\noindent
{\bf Conjecture.} {\em If an integer $n$ is sufficiently large and a system
\[
U \subseteq \{x_i+1=x_k,~x_i \cdot x_j=x_k:~i,j,k \in \{1,\ldots,n\}\}
\]
has only finitely many solutions in positive integers \mbox{$x_1,\ldots,x_n$},
then each such solution \mbox{$(x_1,\ldots,x_n)$} satisfies
\mbox{$x_1,\ldots,x_n \leqslant \left(2+2^{\textstyle 2^{n-4}}\right)^{\textstyle 2^{n-4}}$}.}
\vskip 0.2truecm
\par
A bit stronger version of the Conjecture appeared in \cite{Tyszka}.
The Conjecture implies that there is an algorithm which takes as input a Diophantine equation,
returns an integer, and this integer is greater than the heights of integer (\mbox{non-negative} integer,
positive integer, rational) solutions, if the solution set is finite (\cite{Tyszka}).
\vskip 0.2truecm
\par
Let us pose the following two questions:
\begin{question}\label{que1}
Is there an algorithm which takes as input a Diophantine equation, returns an integer, and this
integer is greater than the heights of integer solutions, if the solution set is finite?
\end{question}
\begin{question}\label{que2}
Is there an algorithm which takes as input a Diophantine equation, returns an integer, and this
integer is greater than the number of integer solutions, if the solution set is finite?
\end{question}
\par
Obviously, a positive answer to Question~\ref{que1} implies a positive answer to Question~\ref{que2}.
\begin{lemma}\label{lem2}
Let $d$ denote the maximal height of an integer solution of a Diophantine equation \mbox{$D(x_1,\ldots,x_n)=0$}
whose solution set in integers is \mbox{non-empty} and finite. We claim that the number of integer solutions
to the equation
\[
D^2(x_1,\ldots,x_n)+\left(n^2+x_1^2+\ldots+x_n^2-u_1^2-u_2^2-u_3^2-u_4^2-v_1^2-v_2^2-v_3^2-v_4^2\right)^2=0
\]
is finite and greater than $d$.
\end{lemma}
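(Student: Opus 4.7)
The plan is to exploit the fact that a sum of two integer squares vanishes only when each square vanishes. I would first split the displayed equation into $D(x_1,\ldots,x_n)=0$ together with
\[
n^2+x_1^2+\ldots+x_n^2 \;=\; u_1^2+u_2^2+u_3^2+u_4^2+v_1^2+v_2^2+v_3^2+v_4^2.
\]
The first condition pins $(x_1,\ldots,x_n)$ to the finite solution set of $D$; for each such choice the quantity $N:=n^2+x_1^2+\ldots+x_n^2$ is a fixed non-negative integer, which bounds $|u_i|,|v_j|\leqslant\sqrt{N}$ and hence leaves only finitely many $(u,v)$ extensions. Summing over the finitely many $x$-tuples establishes finiteness.

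For the lower bound I would pick an integer solution $(x_1^\ast,\ldots,x_n^\ast)$ of $D$ whose height equals $d$ and set $N=n^2+\sum(x_i^\ast)^2$. By the definition of height, either $d=n$ or $d=|x_i^\ast|$ for some $i$, so in either case $N\geqslant d^2$ and therefore $\lfloor\sqrt{N}\rfloor\geqslant d$. For each $k\in\{0,1,\ldots,d\}$ I would then set $u_1=k$ and $u_2=u_3=u_4=0$; Lagrange's four-square theorem supplies integers $v_1,v_2,v_3,v_4$ with $v_1^2+v_2^2+v_3^2+v_4^2=N-k^2$. These $d+1$ tuples share the same $x^\ast$-coordinates but differ in the value of $u_1$, so they are pairwise distinct, and they exhibit at least $d+1$ integer solutions of the augmented equation.

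The argument is short and carries no substantive obstacle. The one point requiring care is the inequality $N\geqslant d^2$ when the height $d$ is attained by $n$ rather than by some $|x_i^\ast|$; the constant term $n^2$ in the squared expression is placed there precisely to cover that case, and Lagrange's theorem does the rest.
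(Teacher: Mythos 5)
Your proof is correct and takes essentially the same route as the paper: both reduce the count to representing $N=n^2+x_1^2+\cdots+x_n^2\geqslant d^2$ as a sum of eight squares in more than $d$ distinguishable ways via Lagrange's four-square theorem. The only cosmetic difference is that the paper counts the $N+1$ splittings $N=x+y$ into non-negative parts and applies Lagrange to each part, whereas you fix $u_2=u_3=u_4=0$ and let $u_1=k$ range over $\{0,\ldots,d\}$, applying Lagrange only to $N-k^2$; both yield more than $d$ pairwise distinct solutions.
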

\begin{proof}
There exists an integer tuple \mbox{$(a_1,\ldots,a_n)$} such that \mbox{$D(a_1,\ldots,a_n)=0$}
and \mbox{${\rm max}\left(n,|a_1|,\ldots,|a_n|\right)=d$}. The equation \mbox{$n^2+a_1^2+\ldots+a_n^2=x+y$}
has \mbox{$n^2+a_1^2+\ldots+a_n^2+1$} solutions in \mbox{non-negative} integers $x$ and $y$. Since
\mbox{$n^2+a_1^2+\ldots+a_n^2+1 \geqslant d^2+1>d$}, the claim follows from Lagrange's \mbox{four-square} theorem.
\end{proof}
\begin{theorem}\label{the5}
A positive answer to Question~\ref{que2} implies a positive answer to Question~\ref{que1}.
\end{theorem}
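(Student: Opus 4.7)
The plan is to reduce Question~\ref{que1} to Question~\ref{que2} by using Lemma~\ref{lem2} as a bridge: that lemma effectively converts ``maximal height of a solution'' into ``number of solutions of an auxiliary equation'', so an algorithm counting solutions yields one bounding heights.

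Suppose Question~\ref{que2} has a positive answer via some algorithm $\mathcal{A}$. I would define an algorithm $\mathcal{B}$ for Question~\ref{que1} as follows. On input $D(x_1,\ldots,x_n)=0$, algorithm $\mathcal{B}$ first constructs the augmented equation
\[
D^2(x_1,\ldots,x_n)+\bigl(n^2+x_1^2+\ldots+x_n^2-u_1^2-u_2^2-u_3^2-u_4^2-v_1^2-v_2^2-v_3^2-v_4^2\bigr)^2=0
\]
in the $n+8$ variables $x_1,\ldots,x_n,u_1,\ldots,u_4,v_1,\ldots,v_4$, then runs $\mathcal{A}$ on this equation, and returns $\mathcal{A}$'s output unchanged.

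To verify correctness I would split into cases on the integer solution set of $D=0$. If it is empty, then the augmented equation is unsolvable over the integers as well (its first summand forces $D=0$), and any integer $\mathcal{A}$ returns vacuously exceeds all heights of integer solutions of $D=0$. If the solution set of $D=0$ is non-empty and finite with maximal height $d$, then the augmented equation is still finitely soluble: for each of the finitely many $(x_1,\ldots,x_n)$ with $D=0$, the tuple $(u_1,\ldots,u_4,v_1,\ldots,v_4)$ is constrained by $\sum_{j=1}^{4}u_j^2+\sum_{j=1}^{4}v_j^2=n^2+\sum_{i=1}^{n}x_i^2$, a fixed non-negative integer admitting only finitely many integer representations as a sum of eight squares. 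By Lemma~\ref{lem2}, the number of integer solutions of the augmented equation strictly exceeds $d$; since $\mathcal{A}$'s output strictly exceeds that count, it strictly exceeds $d$, which is exactly what Question~\ref{que1} requires.

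I do not anticipate any genuine obstacle: the entire content of the reduction is already packaged inside Lemma~\ref{lem2}, and once that lemma is invoked the remaining checks --- finiteness of the augmented solution set and the vacuous handling of the empty case --- are routine.
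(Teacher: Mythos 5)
Your proposal is correct and takes essentially the same route as the paper's own proof of Theorem~\ref{the5}: construct the augmented equation, run the solution-counting algorithm on it, and invoke Lemma~\ref{lem2} to conclude that the returned integer exceeds the maximal height of integer solutions of \mbox{$D(x_1,\ldots,x_n)=0$}. Your additional checks --- the vacuous empty case and the explicit verification that the augmented equation has finitely many solutions via the sum-of-eight-squares constraint --- are details the paper leaves implicit (the finiteness being asserted in Lemma~\ref{lem2} itself), but the reduction is identical.
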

\begin{proof}
In order to compute an upper bound on the heights of integer solutions to a Diophantine equation
\mbox{$D(x_1,\ldots,x_n)=0$} with a finite number of integer solutions, we compute an upper bound
on the number of integer solutions to the equation
\[
D^2(x_1,\ldots,x_n)+\left(n^2+x_1^2+\ldots+x_n^2-u_1^2-u_2^2-u_3^2-u_4^2-v_1^2-v_2^2-v_3^2-v_4^2\right)^2=0
\]
By Lemma~\ref{lem2}, this number is greater than the heights of integer solutions to \mbox{$D(x_1,\ldots,x_n)=0$}.
\end{proof}

\vskip 0.01truecm
\noindent
Apoloniusz Tyszka\\
University of Agriculture\\
Faculty of Production and Power Engineering\\
Balicka 116B, 30-149 Krak\'ow, Poland\\
E-mail address: \url{rttyszka@cyf-kr.edu.pl}
\end{sloppypar}
\end{document}